\documentclass[]{amsart}

\usepackage{amssymb}
\usepackage{amsbsy}
\usepackage{amscd}
\usepackage{amsmath}
\usepackage{amsthm}
\usepackage{graphicx}
\numberwithin{defn}{section}
\newtheorem{thm}{Theorem}\numberwithin{thm}{section}
\numberwithin{lem}{section}
\numberwithin{prop}{section}
\newtheorem{remark}{Remark}\numberwithin{remark}{section}
\newtheorem{example}{Example}\numberwithin{example}{section}

\title[Orthogonal nets and full isothermic tori]
{A comment on full discretized 
isothermic tori in Euclidean spaces}

\author[]{K. Leschke}
\address{School of Computing and Mathematical Sciences\\ University of Leicester\\
  Leicester\\UK}
\email{k.leschke@leicester.ac.uk}
\author[]{F. Pedit}
\address{Department of Mathematics and Statistics\\ University of Massachusetts Amherst\\
  Amherst, MA 01030\\USA}
\email{pedit@math.umass.edu}
\author[]{W. Rossman}
\address{Faculty of Science\\ Kobe University\\
  Kobe 657-8501\\Japan}
\email{wayne@math.kobe-u.ac.jp}

\subjclass[2020]{Primary 53A70; Secondary 53A04}

\keywords{}

\begin{document}

\begin{abstract}
Using discretized orthogonal systems (curvature line 
systems) with periodicity created using Darboux transformations and their 
permutability, we have discrete
and semi-discrete $k$-dimensional isothermic tori which are full in
$n$-dimensional Euclidean space $\mathbb{R}^n$, for any natural 
numbers $2 \leq k \leq n$.  
\end{abstract}

\maketitle

\section{Introduction}\label{sec1}

Darboux transformations of curves in $n$-dimensional Euclidean space $\mathbb{R}^n$ and their 
permutability properties provide for orthogonal nets, which, as we note here, readily 
offer a way to construct discrete
and semi-discrete $k$-dimensional isothermic tori which are {\it full} in 
$\mathbb{R}^n$, that is, do not lie in any $(n-1)$-dimensional affine subspace.
A possible corresponding statement about fullness of entirely smooth isothermic 
tori is not so immediately obtainable from the transformation theory of curves, as 
transforms are a discrete process, 
prompting us to comment about the discretized cases here:

\begin{thm}\label{mainthm}
For any natural numbers $2 \leq k \leq n$, there exist full isothermic discrete and semi-discrete 
$k$-dimensional tori in $n$-dimensional Euclidean space.  
\end{thm}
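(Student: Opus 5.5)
We build the tori as iterated Darboux transforms of a single closed curve, in the spirit of the introduction. First we fix a smooth closed curve $\gamma\colon S^1\to\mathbb{R}^n$ that is full in $\mathbb{R}^n$, for instance a trigonometric moment-type curve such as $t\mapsto(\cos t,\sin t,\cos 2t,\sin 2t,\dots)$, adjusted for odd $n$. Recall that a Darboux transform $\hat\gamma=\mathcal{D}_\mu(\gamma)$ with parameter $\mu$ solves a Riccati-type ODE along $\gamma$. Its monodromy is a Möbius transformation, so for generic $\mu$ the monodromy has fixed points, and these give closed Darboux transforms. The pair $(\gamma,\hat\gamma)$ forms a circular (isothermic) semi-discrete strip.

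For the semi-discrete $k$-dimensional torus, we take one smooth periodic direction along the curve and $k-1$ discrete directions. In each discrete direction $j$ we pick a parameter $\mu_j$ and a closed Darboux transform. Bianchi permutability, which is algebraic (a cross-ratio condition), then generates the whole lattice $\gamma_{m_1,\dots,m_{k-1}}$ of closed curves consistently. To close up each discrete direction, we use the standard fact that iterating Darboux transforms with the same parameter $\mu_j$ and alternating initial points returns to the original curve after finitely many steps. Concretely, a 2-cycle arises when $\hat{\hat\gamma}=\gamma$ by choosing the second fixed point of the monodromy. More generally, a monodromy of finite order gives periodicity $N_j$. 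The resulting net is periodic in every direction, hence a semi-discrete torus. For the fully discrete torus, we first replace $\gamma$ by a closed discrete isothermic polygon, namely a closed full $N$-gon with a cross-ratio factorizing function. We then apply the same procedure with discrete Darboux transforms, whose monodromy is again Möbius, and permutability gives the discrete $k$-dimensional lattice.

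Fullness is the easy part once the torus contains $\gamma$: the torus contains the full curve $\gamma$ (respectively the full polygon) as a coordinate curve, so its affine span is already $\mathbb{R}^n$. Hence we only need $\gamma$ to be full, and need not control the transforms.

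The main obstacle is guaranteeing closedness in every discrete direction simultaneously while keeping the Darboux transforms closed along the curve direction. This requires showing that for suitable $\mu_j$ the monodromy of the Darboux ODE has two distinct fixed points, so that the second fixed point yields a period-2 return with $\mathcal{D}_{\mu_j}$ applied twice. It also requires checking that permutability preserves closedness of all lattice curves. We would first try small $\mu_j$, or $\mu_j$ near a value where the monodromy is computable perturbatively, such as the limit $\mu\to\infty$, where the transform stays close to $\gamma$ and the monodromy is a small perturbation of a hyperbolic map with two fixed points. This shows that generic $\mu_j$ work, and an open set of choices keeps all lattice nodes distinct and non-degenerate.
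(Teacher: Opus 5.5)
\textbf{There is a genuine gap at the first step: existence of a closed Darboux transform of your full seed curve $\gamma$.} Closed transforms correspond to real lightlike eigenvectors of the monodromy $\mathbf{P}\in SO_{n+1,1}$ of $\nabla^\mu$, that is, to fixed points of the induced M\"obius map of $S^n$.

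Your inference ``M\"obius, hence generically has fixed points'' is false for odd $n$. An elliptic element conjugate to a rotation in $SO(n+1)$ with no eigenvalue $1$ has no fixed points on $S^n$; a Hopf-type double rotation of $S^3$ is an example. This is an open condition, so no genericity argument can help. Only for even $n$ is a real lightlike eigenvector guaranteed; this is the fact from \cite{GQ} quoted in the paper's last remark.

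Your perturbative sketch does not settle this either. To first order in $\mu$ the monodromy is governed by $\oint \frac{X\wedge X_t}{m\langle X_t,X_t\rangle}\,dt$, and this term is typically elliptic. For the unit circle with $m=1$ it is exactly $2\pi\, e_1\wedge e_2$. So nothing prevents a full curve from acquiring a fixed-point-free second rotation. As $\mu\to\infty$ the connection form is nilpotent, so there is no evident hyperbolic limit. Your sentence ``We would first try \dots'' is a plan, not an argument. Hence the odd-$n$ case is unproved, for your discrete polygon as well as the smooth curve. You would also need the closed transform to avoid $\gamma$.

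\textbf{How the paper avoids this.} It takes a planar circle as seed. The circle's connection lives on the $\mathbb{R}^{3,1}$ spanned by the plane and the two extra light-cone coordinates, and it is trivial on the normal directions. Closed transforms then exist for every $\mu\neq 0$ by \cite{CLO2}. Fullness is placed in the first layer of transforms rather than in the seed: a M\"obius transformation fixing the circle tilts the $j$-th transform into $\text{span}\{e_1,e_2,e_j\}$. The torus is then chosen to contain $x$ and all $x_j$, $j=3,\dots,n$. Putting fullness into the seed, as you do, is a legitimate simplification for even $n$, with a nondegeneracy check. For odd $n$ you need a seed whose monodromy you can control.

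\textbf{The discrete closing is mis-argued, though the conclusion survives.} A monodromy of finite order (a resonance point) makes transforms close in the \emph{smooth} direction after several turns, not in the discrete one. No ``second fixed point'' is needed for a period-$2$ return, because $\gamma$ is automatically a closed $\mu_j$-transform of $\hat\gamma$ by symmetry. Any closed walk in the Bianchi cube, such as $\gamma\to\gamma_j\to\gamma$, yields a torus; this is the paper's ``discrete curve of curves''. Closedness of all cube curves is immediate from the pointwise algebraic permutability. So the obstacle you single out is not the real one; the existence question above is.
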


The tools we use to show this are well known, as described in 
the next section, and then we give the proof in the section after.  

The desired $k$-dimensional tori can be produced in two steps:
\begin{enumerate}
\item Choose initial conditions and spectral parameters for an initial set of Darboux transformations 
of a given polarized loop so they are also loops and, together with the first given loop, are 
full in $\mathbb{R}^n$.  
\item Take further transforms via permutability 
to create a Bianchi cube of transforms that contain full toroidal $k$-dimensional subnets.  
\end{enumerate}
In more detail, beginning with the initial planar loop (we will use a circle), either smooth or discrete, one can then 
include $n-2$ distinct Darboux transformations of that loop which are also loops.  
With suitable choices of initial conditions for the transforms one can guarantee that these $n-1$ 
loops form a full set in $\mathbb{R}^n$.  Using permutability to obtain all subsequent Darboux 
transformations (a useful formulation of this process can be found in \cite{B}), 
which are automatically loops, one has a Bianchi cube from which to 
choose a $k$-dimensional toroidal subnet.  It is possible to choose that subnet so that the initial circular 
loop and the first layer of $n-2$ Darboux transformations are all included, making the subnet full in 
$\mathbb{R}^n$.  This subnet is then discrete in $k-1$ 
directions, and either smooth or discrete in the single direction of the initial loop, so it is, 
respectively, a semi-discrete or completely discrete full isothermic $k$-torus, proving Theorem 
\ref{mainthm}.  Our goal is to explain this in detail.  

\section{Preliminaries}

\subsection{M\"obius geometry for curves}

We begin with the light cone model for M\"obius geometry, needed for discussing some tools 
we will describe.  Some suitable references on M\"obius geometry are \cite{Bl}, \cite{BS} and \cite{H}, and 
references for families of flat connections of curves and surfaces are \cite{BHMR}, \cite{Bnext2}, \cite{BHRS} and 
\cite{BHRS2}, with \cite{BHMR} in particular dealing with curves.  

Take ($n+2$)-dimensional Lorentz space $\mathbb{R}^{n+1,1}$ with metric 
$\langle \cdot , \cdot \rangle$ of signature $(+,...,+,-)$ and with light cone
\[ L^{n+1} = \{ X \in \mathbb{R}^{n+1,1} \, | \,  \langle X,X \rangle = 0 \} \; . \]
Consider a smooth curve into the light cone, obtained by lifting a given smooth curve $x(t)$ 
in $\mathbb{R}^n$, 
\[ X=X(t)= \begin{pmatrix} 2 x \\ 1-|x|^2_{\mathbb{R}^n} \\ 1+|x|^2_{\mathbb{R}^n} \end{pmatrix} , \;\;\; x=x(t) \in \mathbb{R}^n \; . \] 
Note that $X(t)$ lies in the set 
\[ \{ X \in L^{n+1} \, | \, \langle X, q \rangle =-1 \} \; , \;\;\; q=\begin{pmatrix} \vec{0} \\ -1/2 \\ 1/2 \end{pmatrix} \; , \] 
which is isometric to $\mathbb{R}^n$ when given the 
metric induced by $\mathbb{R}^{n+1,1}$.  

\begin{remark}
This set is most naturally considered as the conformal $n$-sphere of 
M\"obius geometry after projectivizing $L^{n+1}$ to $PL^{n+1}=L^{n+1}/\sim$, where 
$a \sim b$ if there exists a nonzero real $\alpha$ such that $a = \alpha b$.  This 
allows for inclusion of other $n$-dimensional spaces that are conformally equivalent to $\mathbb{R}^n$ 
and for which Darboux transforms are no different.  However, since we are restricting 
to Euclidean space, we just use the set as described above.  
\end{remark}

We give this curve $X(t)$, equivalently $x(t)$, a polarization $\frac{dt^2}{m}$ on its domain, where $m=m(t)$ is a nowhere vanishing 
smooth function.  The associated family of flat $SO_{n+1,1}$-connections is 
\[ \nabla^\lambda=d+\lambda 
\frac{X \wedge X_t}{m \cdot \langle X_t , X_t \rangle} dt \; , \]
where $\lambda \in \mathbb{R}$ is called the spectral parameter, 
\[ X_t := dX/dt = 
\begin{pmatrix}
2 x_t \\ -2 \langle x,x_t \rangle_{\mathbb{R}^n} \\ 2 \langle x,x_t \rangle_{\mathbb{R}^n} 
\end{pmatrix} 
\] 
is the derivative with respect to $t$, and 
$\wedge$ is defined via 
\[ a \wedge b ( c ) = \langle a , c \rangle b 
- \langle b , c \rangle a \; , \] 
identifying $\wedge^2 \mathbb{R}^{n+1,1}$ with $so_{n+1,1}$.  

\subsection{Cross ratios}

Another tool we need is the cross ratio, computed for four points in $\mathbb{R}^n$, for any $n \geq 2$.  
When real, the cross ratio we describe below 
is equivalent to the well known definition of the cross ratio in the complex plane regardless of choice of 
$n$, and being real means the four points lie on a circle in $\mathbb{R}^n$ (one reference for this is \cite{St}).  
When $n \leq 4$ one can use $2 \times 2$ matrix models or quaternionic models to compute it 
(see \cite{NORYPJ}, for example).  However,
 for general $n$ we can use the 
lightcone model as follows: for the cross ratio to be real, we require that the lifts $Y$, $Y_1$, 
$Y_{12}$, $Y_2$ of four points $y$, $y_1$, $y_{12}$, $y_2$ in $\mathbb{R}^n$ lie in a three 
dimensional space slicing the light cone (equivalent to concircularity of $y$, $y_1$, $y_{12}$, $y_2$), 
and then one can confirm that the cross ratio satisfies (see \cite{BHRS}, \cite{NORYPJ}, \cite{H}) 
\[ \text{cr}(y,y_1,y_{12},y_2) = 
\frac{\langle Y,Y_1  \rangle \langle Y_{12},Y_2  \rangle+\langle Y,Y_2  \rangle \langle Y_1,Y_{12}  \rangle - 
\langle Y,Y_{12}  \rangle \langle Y_1,Y_2  \rangle}{2 \langle Y,Y_2  \rangle \langle Y_1,Y_{12}  \rangle} \; . \]
This is one way to compute the cross ratio of concircular points for any value of $n$.  

In the case of two curves $y(t)$ and $z(t)$ with a common circle congruence, that is, a curve of circles $\frak{C}(t)$ so 
that each circle $\frak{C}(t)$ is tangent to both curves at the corresponding points $x(t)$ and $y(t)$, we can define an 
infinitesimal version of the cross ratio by taking a limit (see \cite{BHMR} for original development of the notion, and 
\cite{CRS}, \cite{Y} for examples of its application): 
\[ \lim_{\Delta t \to 0} (\Delta t)^{-2} \cdot \text{cr}(y(t),y(t+\Delta t),z(t+\Delta t),z(t)) \; . \]
This definition is suitable only when such a circle congruence exists.  In particular, existence of that 
circle congruence guarantees this limit is real-valued.  

We remark that cross ratios can be defined quite cleanly from a gauge theoretic viewpoint, with numerous  
sources for this (see works by F.E. Burstall, or Equation (3.20) in \cite{NORYPJ} for example).  

\subsection{Isothermic nets}\label{sec3}

We now consider general curvature line systems, and the subclass of isothermic nets.  Some suitable references are 
\cite{Bi}, \cite{BMS}, \cite{BS}, \cite{NORYPJ}, \cite{H}.  

A smooth ($d$-dimensional) curvature line system 
in $n$-dimensional Euclidean space $\mathbb{R}^n$ is a smooth immersion 
\[f=f(x_1,...,x_d) : \mathbb{R}^d \to \mathbb{R}^n \; , \;\;\; d \leq n \] 
such that any coordinate curve given by fixing all but one $x_i$ is a 
curvature line of any larger coordinate submanifold in the image 
of $f$ (where some number of coordinates are fixed).  Note that 
to have well-defined curvature lines in codimensions greater than one 
requires flatness of the respective normal bundles.  
In particular, the coordinate curves with distinct principal curvatures 
will all be orthogonal to 
each other at points where they intersect.  
In fact, we assume such orthogonality at all points 
regardless of the values of the principal curvatures.  
We can partially or fully 
discretize this by having some or all coordinate variables $x_i$ lie 
in the integers $\mathbb{Z}$ rather than $\mathbb{R}$.  Then, on $2$-dimensional 
coordinate subnets $\mathcal{S}$, we can define the notion of coordinate curvature lines as follows \cite{Bo}, \cite{BMS} 
(see Figure \ref{fig:diagrams}):
\begin{itemize}
\item when $\mathcal{S}$ is smooth, its variable coordinates are curvature lines in the usual smooth sense, 
\item when $\mathcal{S}$ is fully discrete, its elementary 
quadrilaterals are concircular, that is, their four vertices lie on a circle (for an analogous notion to 
immersedness, it suffices in our setting to assume the quadrilaterals each have four distinct vertices), 
and 
\item when $\mathcal{S}$ is semi-discrete, the 
corresponding points on its adjacent smooth curves have common 
tangential circles (this can be called a Ribaucour transform from one curve to the other, and for an analogous 
notion to immersedness here, it would suffice to assume the two corresponding points 
do not coincide).  
\end{itemize}

\begin{remark} 
Amongst the discrete directions, an alternate 
description using notions from M\"obius geometry can 
be found in \cite{HS}, using orthogonality properties and intersection properties of 
families of contact elements defined at the vertices of the system. 
\end{remark}

\begin{figure}
    \includegraphics[width=30mm]{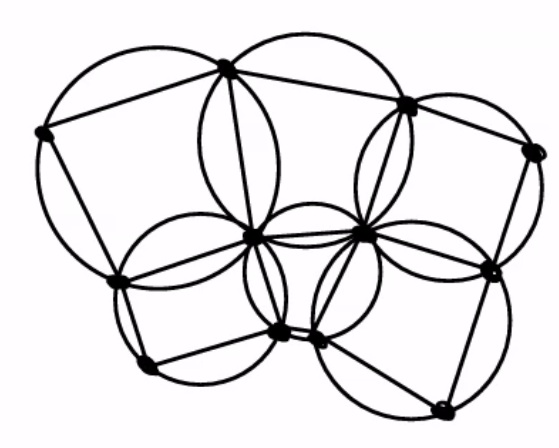}
    \includegraphics[width=30mm]{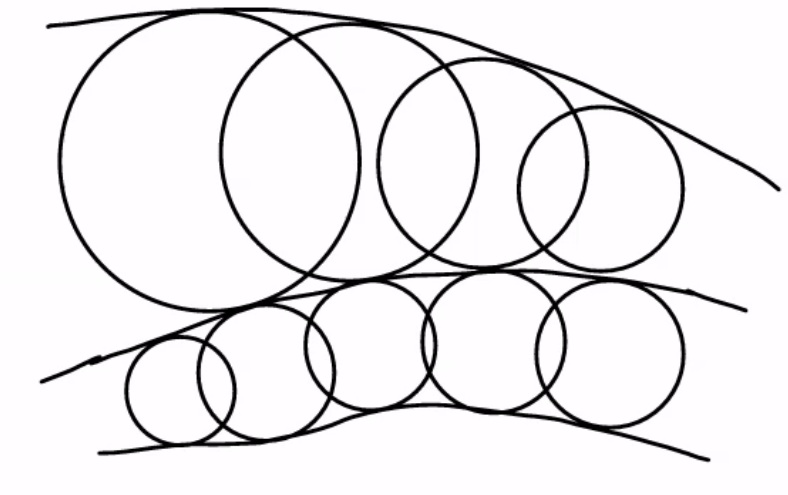}
	\caption{Fully and semi discrete $2$-subnets of orthogonal nets.}
	\label{fig:diagrams}
\end{figure}

If additionally, for any $2$-subnet $\mathcal{S}$ determined by two coordinates $(x_i,x_j)$, 
the following conditions hold 
\begin{enumerate}
\item if 
those two coordinates are both smooth, then they are conformal as well as curvature lines, 
\item if those two coordinates are both discrete, then they are 
discrete isothermic, that is, the cross ratios of the quadrilaterals factorize: 
for the cross ratios $\text{cr}_1$, $\text{cr}_2$ $\text{cr}_3$, $\text{cr}_4$ of the four quadrilaterals ordered counterclockwise 
about any chosen vertex, we have 
\[ (\text{cr}_1)(\text{cr}_2)^{-1}(\text{cr}_3)(\text{cr}_4)^{-1} = 1 \; , \]
and it can be immediately shown this is 
equivalent to the cross ratio as a function on 
faces of $\mathcal{S}$ (faces with lower left vertex having $(x_i,x_j)$ as coordinates) taking the form 
\[ \text{cr} = \frac{a(x_i)}{b(x_j)} \; , 
\]
where the function $a(x_i)$, resp. $b(x_j)$, on the edges where $x_i$, resp. $x_j$, varies 
depends only on $x_i$, resp. $x_j$.  
\item if $x_i$ is smooth and $x_j$ is discrete, then $\mathcal{S}$ is semi-discrete isothermic, that is, the 
infinitesimal cross ratios allowed by the internal circle congruence factorize, i.e. $\text{cr} = a(x_i)/b(x_j)$ 
holds, where $a(x_i)$ is now a function of a smooth variable, 
\end{enumerate}
then we call the orthogonal net an {\em isothermic} net.  

\subsection{Darboux transforms of curves via M\"obius geometry}\label{sec4}

In the case of a smooth isothermic surface in $\mathbb{R}^3$, a Darboux transform is another isothermic surface so that 
both surfaces have a common sphere congruence, and for which the correspondence of points between the two surfaces given by the 
sphere congruence preserves conformal structure and curvature lines.  One approach for computing such 
Darboux transformations is to lift the situation up to M\"obius geometry and frame the problem in terms of 
parallel sections of certain flat connections that isothermic surfaces always have.  (See \cite{Bnext}, 
\cite{H}, \cite{NORYPJ}, and we describe below this 
kind of lifting to M\"obius geometry for the case of curves.)  There is also a strictly more general class of transformations, 
namely Ribaucour transformations, where 
the pointwise correspondence preserves curvature lines but not necessarily conformal structure.  

We can similarly consider transformations of curves in the plane $\mathbb{R}^2$ or higher dimensional Euclidean spaces $\mathbb{R}^n$, 
where now there is a common circle congruence for both the original and transformed curves, which 
determines the pointwise correspondence between the two curves.  Preserving notions like principal curvature lines and conformal 
structure is now a mute point, but we can then insert a polarization on the common domain of the two curve maps that encodes the 
infinitesimal cross ratios allowed by the circle congruence, as seen in \cite{BHMR} and which we describe in the remark at the 
end of this subsection.  

Without inserting a polarization, we have a Ribaucour transformation of a curve, and once an appropriate polarization is given, we 
call it a Darboux transformation.  The polarization can be inserted after the transform is given, so Ribaucour transforms are essentially 
equivalent to Darboux transforms in the case of a curve.  However, when we wish to consider repeated Darboux transformations 
all preserving one choice of polarization, this becomes a strictly smaller class of objects than the collection of sequences of Ribaucour transformations. 
In the proof of Theorem \ref{mainthm}, we in fact do need repeated Darboux transforms with common polarization to produce $k$-tori 
that are isothermic.  

For computing Darboux transforms, we can employ M\"obius geometry.  We suggest other sources 
for proofs of statements, primarily \cite{BHMR}, and \cite{CRS} as well.  One could consider the transforms in the 
context of Euclidean geometry using a cross ratio condition.  
However, we employ an approach below that uses the lightcone model and connections, for two reasons: 
\begin{itemize}
\item to keep the analogy 
with how Darboux transforms of surfaces are naturally considered in M\"obius geometry, and 
\item to formulate 
closing conditions on the transforms in the next subsection 
in terms of existence of lightlike eigenvectors of certain Lorentz orthogonal maps.  
\end{itemize}

It can be shown that a 
Darboux transformation $\hat x(t)$ of $x(t)$, considered in $\mathbb{R}^n$, is a curve 
such that the corresponding lift $\hat X(t)$ satisfies 
\[ \nabla^\mu (r(t) \hat{X}(t)) = 0 \]
for some particular value of $\lambda=\mu$ and some choice of scalar function $r(t)$, that is, some 
scalar multiple of $\hat X(t)$ is a parallel section of $\nabla^\mu$.  We call $\mu$ the spectral parameter of the transformation.  

This provides a computational method for finding Darboux transformations.  

\begin{remark}
We can also describe Darboux transformations in terms of infinitesimal cross ratios allowed by the common circle 
congruence, and corresponding points of $x(t)$ and $\hat x(t)$ will then have infinitesimal 
cross ratios equaling $\mu/m$.  See \cite{BHMR}, \cite{CRS}, \cite{Y}.  
\end{remark}

\subsection{Closing Darboux transforms of loops}
Now we consider a smooth closed curve in $\mathbb{R}^n$, that is, a periodic loop 
\[ x=x(t): \mathbb{R} \to \mathbb{R}^n \; , \;\;\; x(t+2\pi)=x(t) \;\; \text{for all} \; t \; , \]
with polarization $\frac{dt^2}{m}$ that is also $2 \pi$-periodic.  

It is known that closed Darboux transforms (i.e. loops) exist for infinitely many 
nonzero values of $\mu$ for large classes of curves, see \cite{CLO5} (and also \cite{CLO}, \cite{CLO2}, \cite{CLO3}, \cite{CLO4} for the case of smooth and discrete circles).  
In particular, this is the case for every nonzero value of $\mu$ when $x(t)$ is a circle with arc-length 
polarization so that $m=1$ when the circle is arc-length parametrized.  
We can consider this via the parallel transport map, call it $\textbf{P}$, of $\nabla^\mu$ along $t \in [0,2 \pi]$, 
which is an isometry of $\mathbb{R}^{n+1,1}$ since $\nabla^\mu$ is an $SO_{n+1,1}$-connection.  
Taking lightlike eigenvectors of this map as initial conditons will result in closed Darboux 
transformations.  

\begin{example}\label{exa}
In fact, giving $\mathbb{R}^n$ the standard orthonormal basis $e_j$, and taking the initial curve as the circle 
\[ x(t) = (\cos t,\sin t, 0, ... , 0) \; , \] 
it was shown in \cite{CLO2} that for each nonzero $\mu$ there is a closed Darboux transform $\hat x (t)$ lying 
in the same plane that contains the circle $x(t)$, that is, within $\text{span}\{e_1,e_2\}$, and not lying within the 
circle itself.  These Darboux transformations can be considered in the plane 
by the methods in the works \cite{CLO}, \cite{CLO2}, \cite{CLO3}, \cite{CLO4}, or equivalently, by 
lifting to $\mathbb{R}^{n+1,1}$ and applying the approach described above, and then projecting back to $\text{span}\{e_1,e_2\}$.  
Applying a M\"obius transformation that fixes the circle $x(t)$, we can move $\hat x (t)$ into 
$\text{span}\{e_1,e_2,e_j\}$ for some $j$ between $3$ and $n$, but not contained in the plane $\text{span}\{e_1,e_2\}$ itself.  
In this way, we can create $n-2$ distinct closed Darboux transforms $x_j(t)$ of $x(t)$ with distinct spectral parameters 
$\mu_j$, for $j=3,...,n$ ($\hat x(t)$ now renamed to $x_j(t)$), which lie in $\text{span}\{e_1,e_2,e_j\}$ but not entirely within $\text{span}\{e_1,e_2\}$.  
We can then extend by permutability to an $(n-2)$-dimensional Bianchi cube, as in the next subsection, 
of closed Darboux transformations whose first level of transforms is these $n-2$ 
transforms $x_j(t)$, and the set 
\[ \{ x(t) \, | \, t \in [0,2 \pi)\} \cup \{  x_j(t) \, | \, t \in [0,2 \pi)\} \]
is full in $\text{span}\{e_1,e_2,e_j\}$ for each $j$.  
\end{example}

\subsection{Resulting Bianchi cubes}

To fill out the Bianchi cube of a first layer of closed Darboux transformations (some suitable references are 
\cite{Bi}, \cite{BS}, \cite{Bnext2}, \cite{H}, \cite{NORYPJ}), 
we can proceed as follows: 
Given a  closed curve $y(t)$ and two closed Darboux transforms $y_1(t)$ and $y_2(t)$ with distinct 
spectral parameter values $\nu_1$ and $\nu_2$, respectively, a common Darboux transform 
loop $y_{12}(t)$ is given algebraically by 
\[ \text{cr}(y(t),y_1(t),y_{12}(t),y_2(t))=\frac{\nu_1}{\nu_2} \; , \] 
where $y_{12}(t)$ has spectral parameter $\nu_2$, resp. $\nu_1$, with respect to $y_1(t)$, resp. $y_2(t)$.  
This is called {\em permutability}.  
This can be repeated at all locations within the Bianchi cube to produce the complete cube, and from this algebraic construction it is 
clear that if the first level of Darboux transforms are all closed, then all curves in the complete cube are closed as well.  
Furthermore, this cube is a discretized isothermic net with $d=n-1$, and any subnet is also discretized isothermic.  

\begin{remark}
In fact, one can circumvent the use of cross ratios by describing permutability of Darboux transformations in terms of gauge relations between 
the connections described in the previous section, see \cite{BCHPR}, \cite{NORYPJ} for more on this.  
\end{remark}

\subsection{Darboux transforms of discrete curves}
A similar description can be made when the initial curve $x$ is a discrete loop, and we refer the 
reader to \cite{CLO2}, \cite{Nagoya}, \cite{NORYPJ}, \cite{BCHPR}, \cite{CRS2}, \cite{BHRS}  
for information on this.  Here we simply note what the corresponding discrete 
objects are:
\begin{itemize}
\item the discrete loop is now of the form 
\[
x_j : \{0,1,2,...,\ell \} \to \mathbb{R}^n \; , \;\;\; x_\ell=x_0 
\] for some natural number $\ell \geq 3$, 
with the corresponding discrete lift $X_j$ defined the same way as in the smooth case.  
\item The polarization is now $1/m_{j,j+1}$ with $m_{j.j+1}$ a function defined on edges 
$j,j+1$.  
\item the discrete flat connections are (with $m=m_{j,j+1}$) 
\[ \hspace{0.4in} 
\nabla_{j+1,j}^\lambda Y = Y+\tfrac{\lambda}{m \langle X_j,X_{j+1} \rangle} 
\left( \tfrac{m}{m-\lambda} \langle Y,X_j \rangle X_{j+1}- \langle Y,X_{j+1} 
\rangle X_j \right) , 
\] mapping $T_{X_j} \mathbb{R}^{n+1,1}$ to $T_{X_{j+1}} \mathbb{R}^{n+1,1}$.  
Then the parallel section condition for defining a 
Darboux transform remains unchanged, but $r_j$ is now a discrete 
function.  However, this type of formulation is more useful for the case of discrete surfaces, and 
use of such discrete connections and discrete parallel sections for discrete curves 
can be avoided if one takes the definition of Darboux transforms to be as in the next item. 
\item The curve $x_j$ and its Darboux transform $\hat x_j$ with spectral parameter 
$\mu$ satisfy that $x_j$, $x_{j+1}$, $\hat x_{j+1}$ and $\hat x_j$ lie on a circle and 
have cross ratio $\mu/m_{j,j+1}$.  
\end{itemize}

\begin{figure}
    \includegraphics[width=50mm]{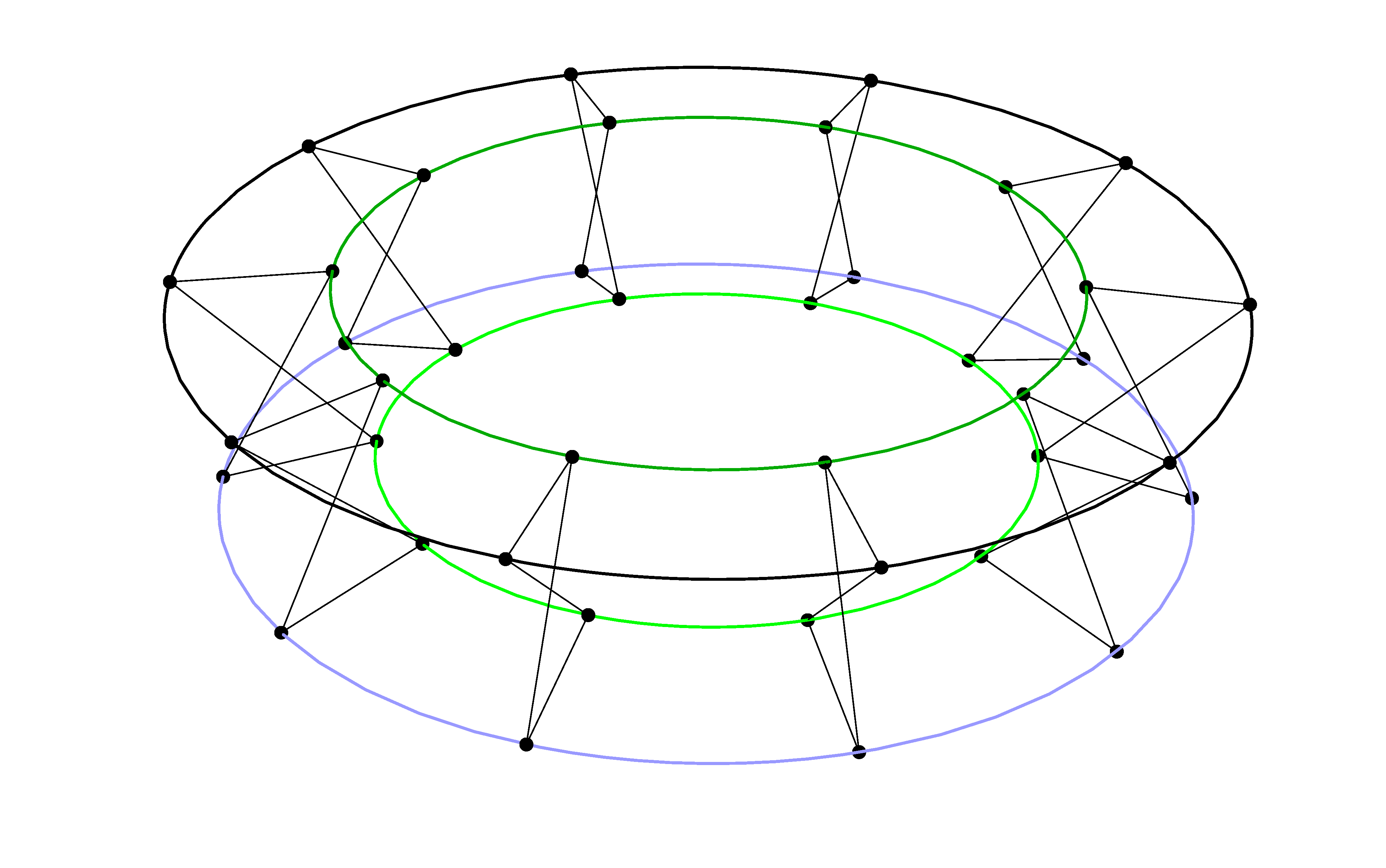}
    \includegraphics[width=50mm]{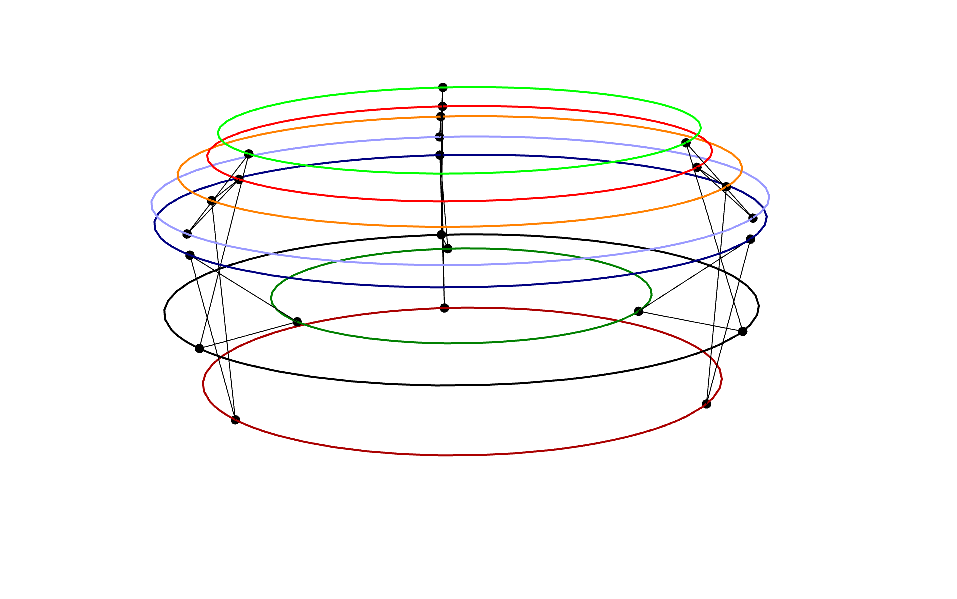}
	\caption{Semi-discrete isothermic $2$-tori with coarse meshes consisting of circles 
                     in the smooth direction in $\mathbb{R}^3$, with polarizing function $m=1$. 
                     On the left, the thickest-drawn curve is the original circle $x$, the two adjacent
                     circles are Darboux transforms with spectral parameter $\mu=3$ 
                     for the upper one and $\mu=8$ for the lower one, 
                     and the final bottom circle completes the Bianchi cube via permutability.  
                     A more complicated example still with smooth circles is 
                     shown on the right.}
	\label{fig:SD-tori-1}
\end{figure}

\begin{figure}
    \includegraphics[width=55mm]{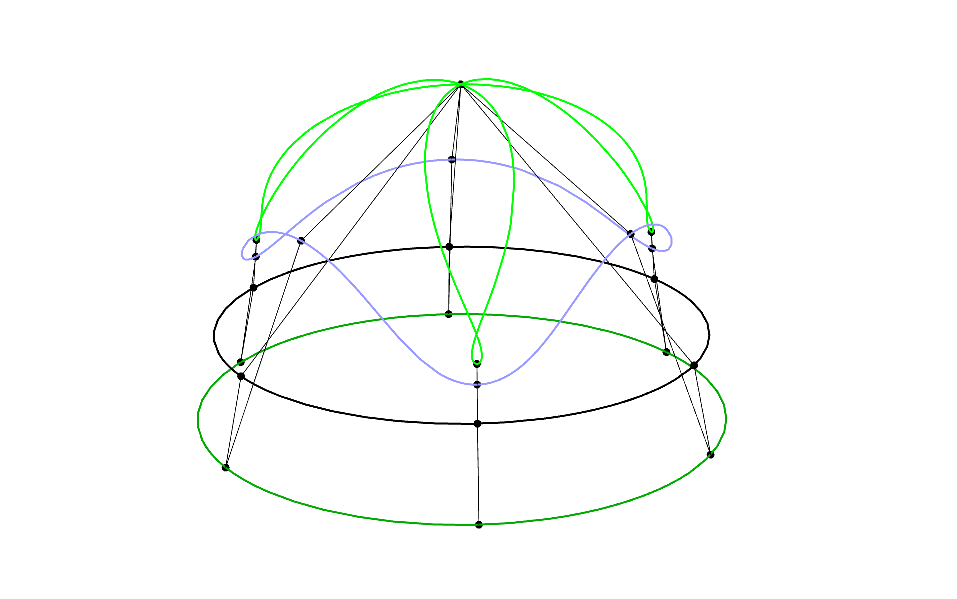}
    \includegraphics[width=45mm]{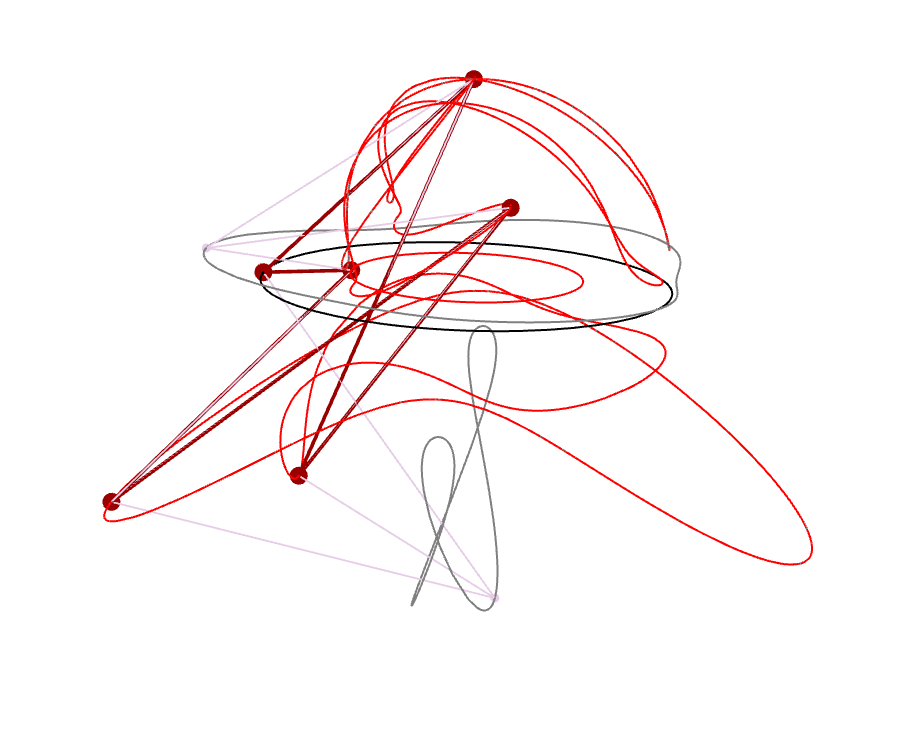}
	\caption{Semi-discrete isothermic $2$-tori with coarse meshes in $\mathbb{R}^3$ 
                        whose smooth curves are not all circular and are obtained by Darboux 
                      transforms at resonance points.}
\label{fig33}
\end{figure}

\begin{figure}
    \includegraphics[width=37mm]{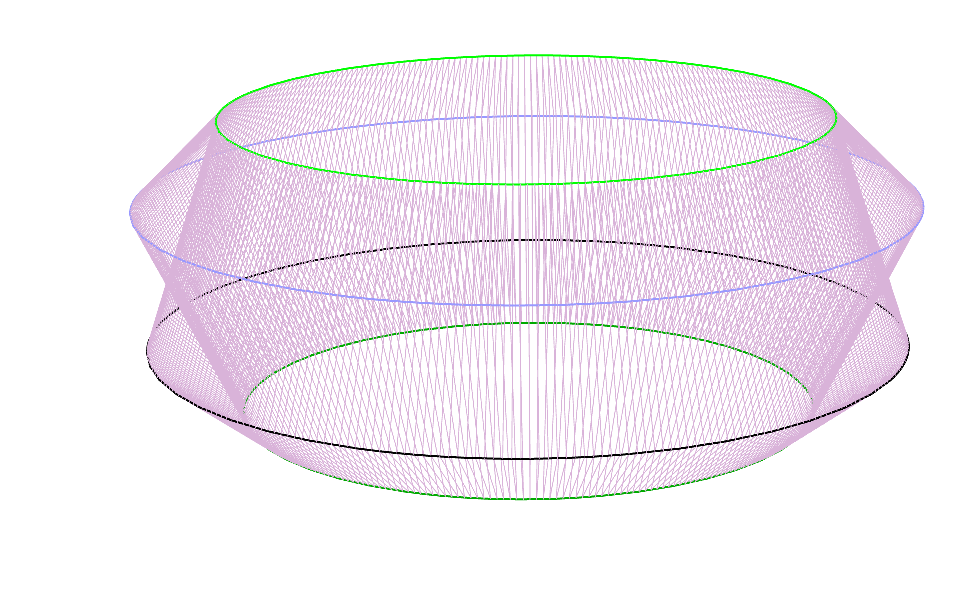}
    \includegraphics[width=38mm]{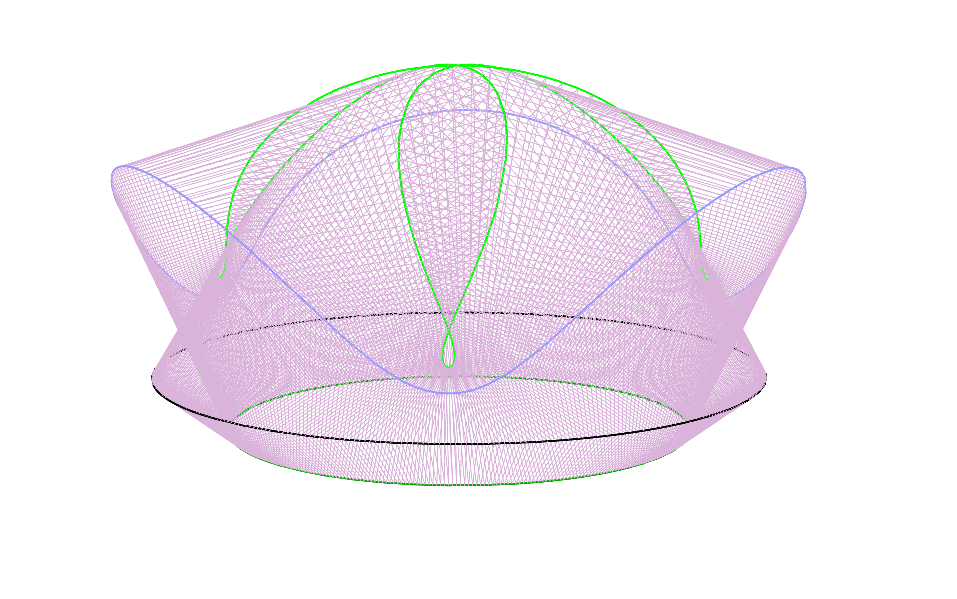}
\hspace{-0.28in}
    \includegraphics[width=38mm]{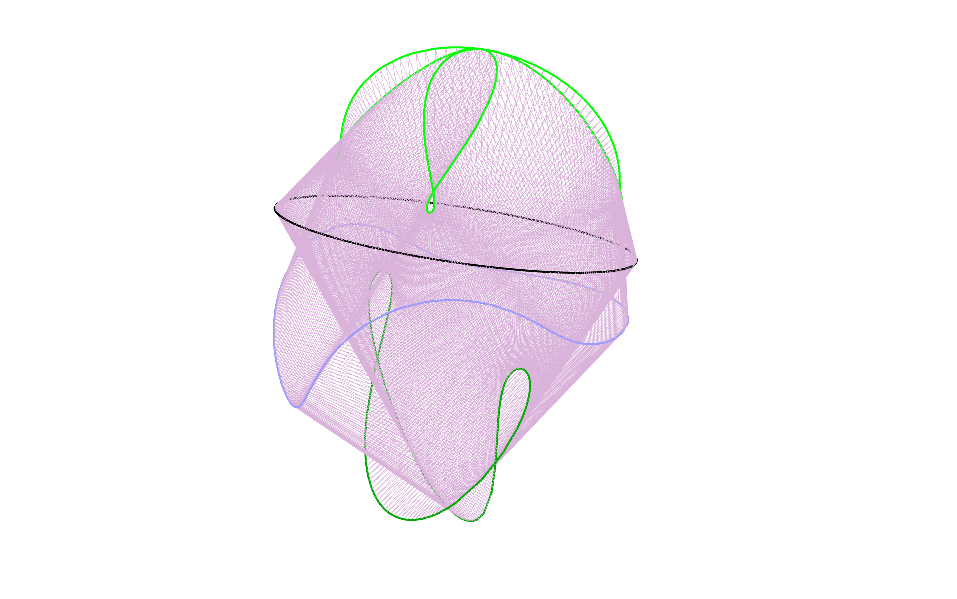}
\vspace{-0.3in}
	\caption{Three semi-discrete tori in $\mathbb{R}^3$ with finer meshes.  Each uses a circle 
as inital curve, but the transformed curves are non-circular in the two right figures, using 
resonance points.}
\label{fig44}
\end{figure}

\begin{figure}
    \includegraphics[width=35mm]{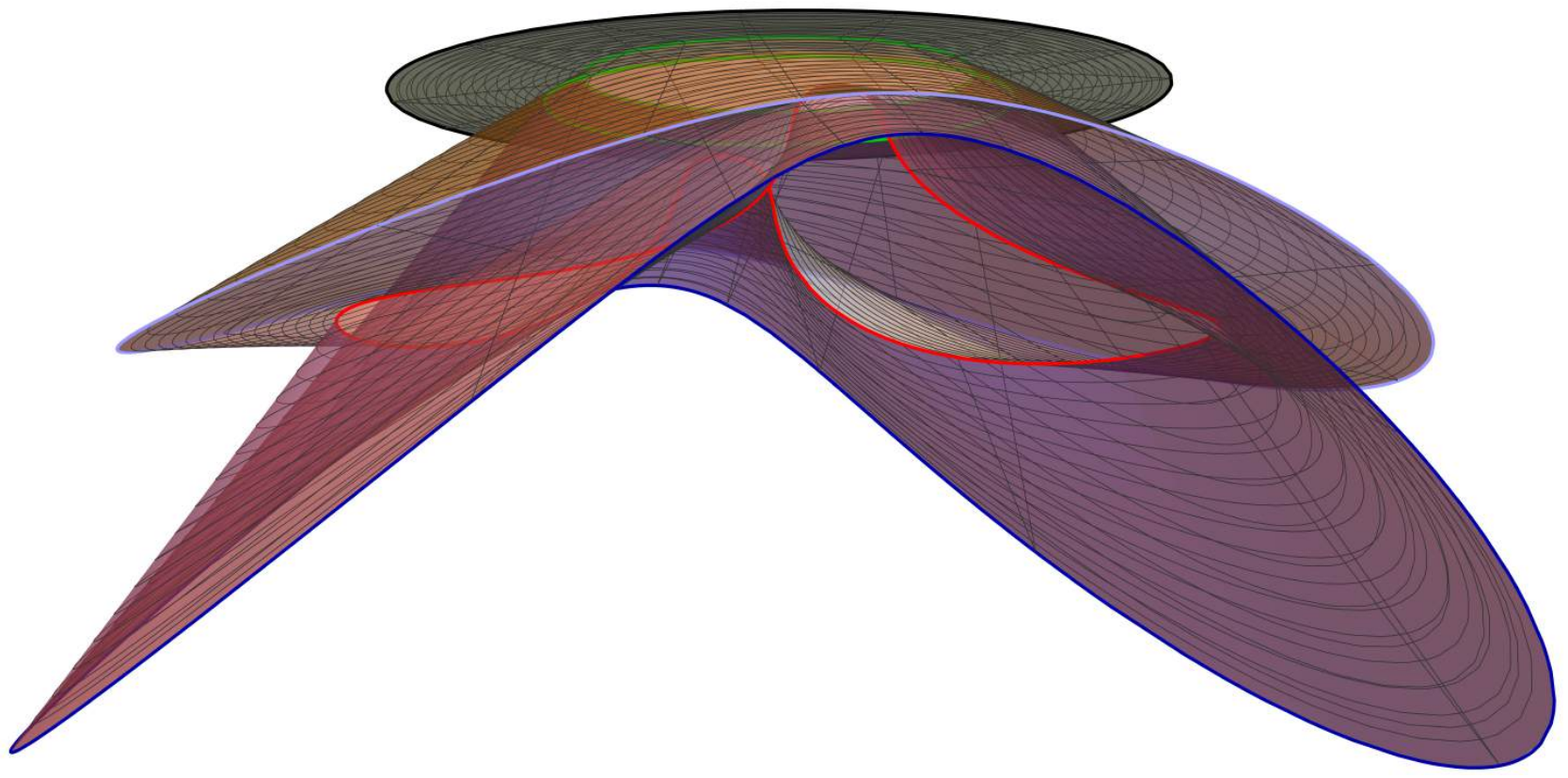}
    \includegraphics[width=33mm]{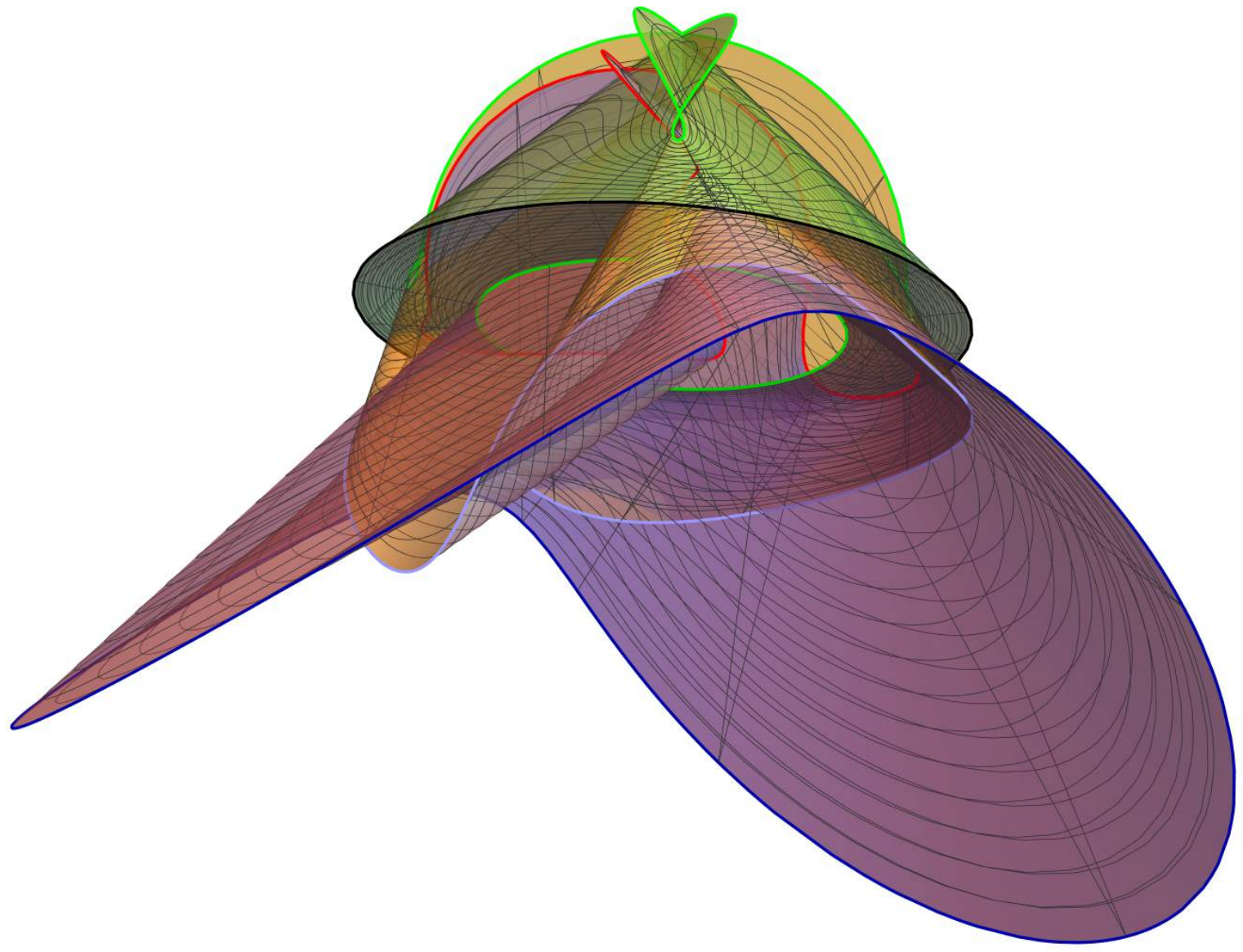}
    \includegraphics[width=33mm]{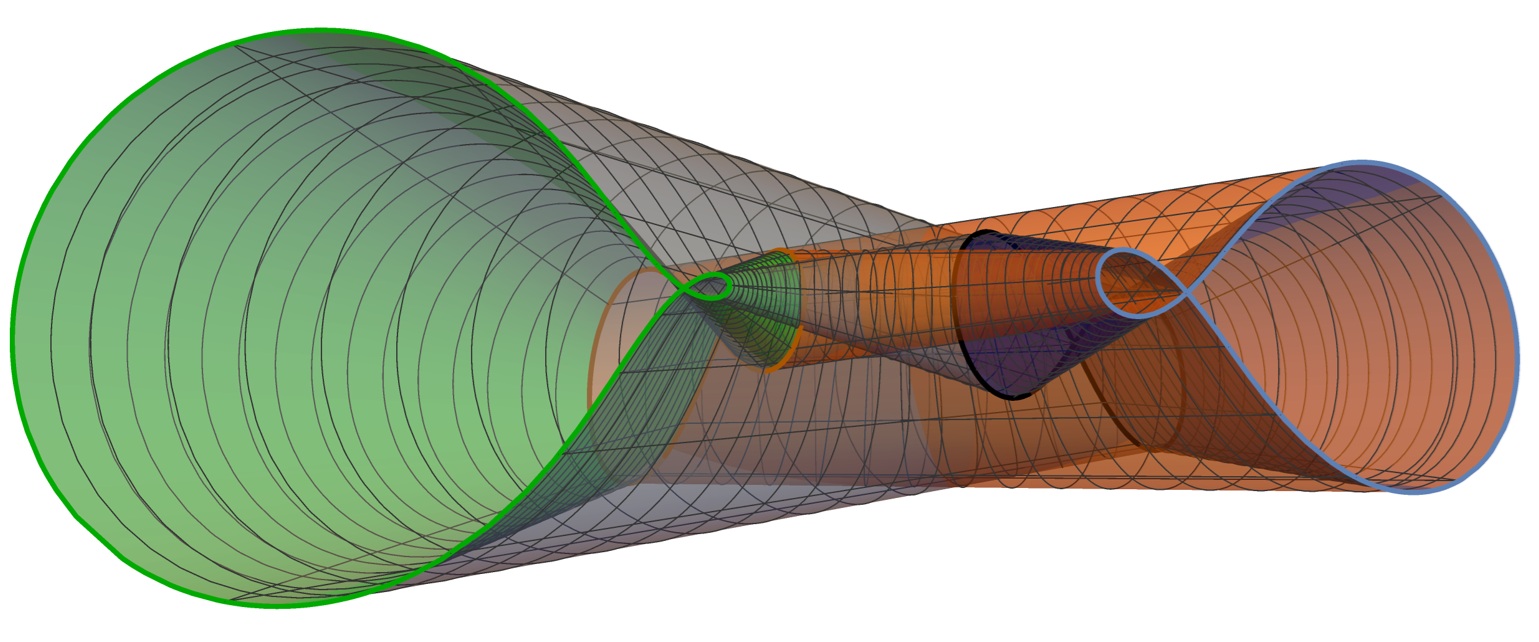}
	\caption{Three semi-discrete tori in $\mathbb{R}^3$ with finer meshes.  
The left two figures use a circle 
as inital curve, with non-circular transformed curves via resonance points.  The right figure 
starts from a figure-eight elastic curve and uses non-resonant spectral parameters to obtain 
non-trivial Darboux transforms.}
\label{fig555}
\end{figure}

\begin{figure}\label{fig66}
    \includegraphics[width=66mm]{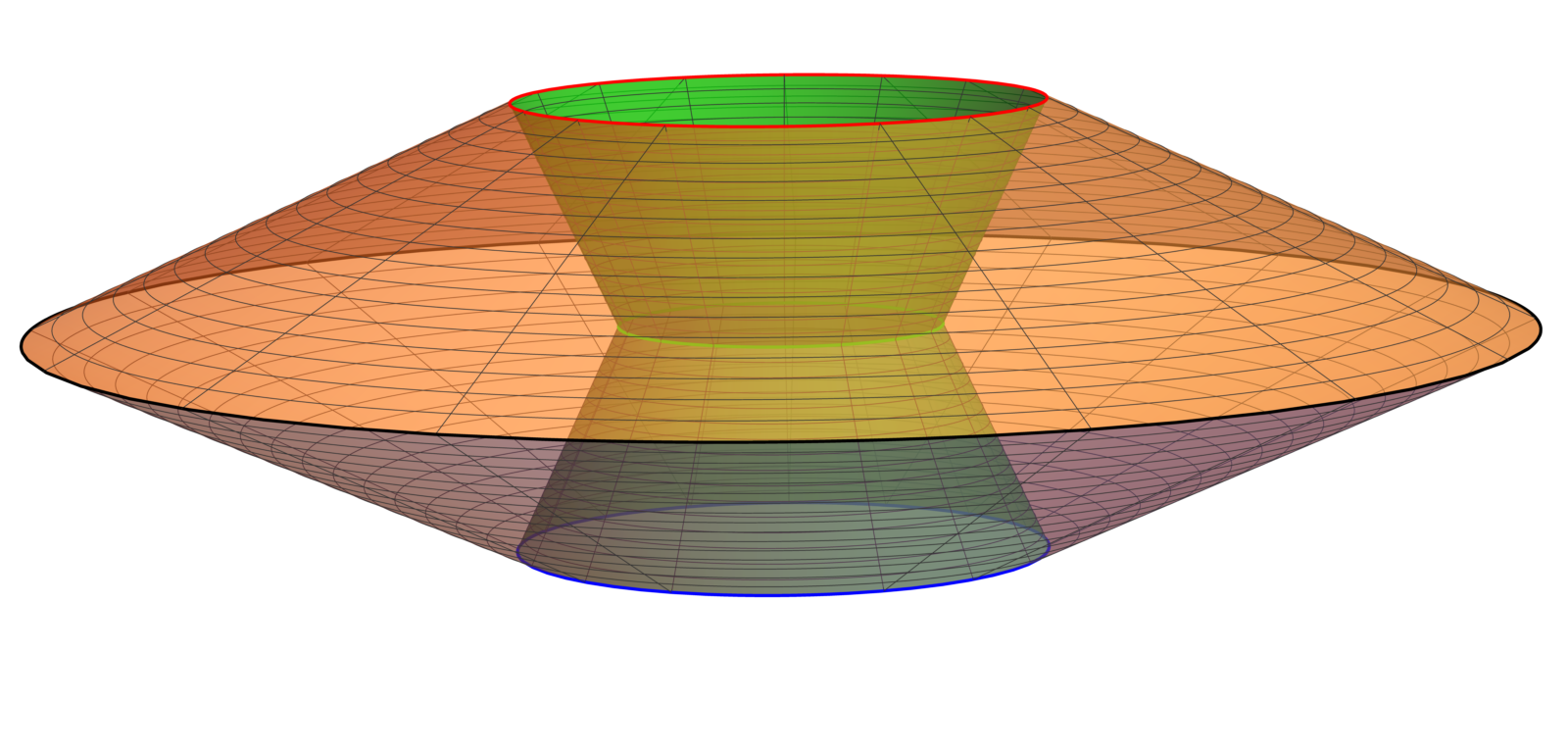}
    \includegraphics[width=40mm]{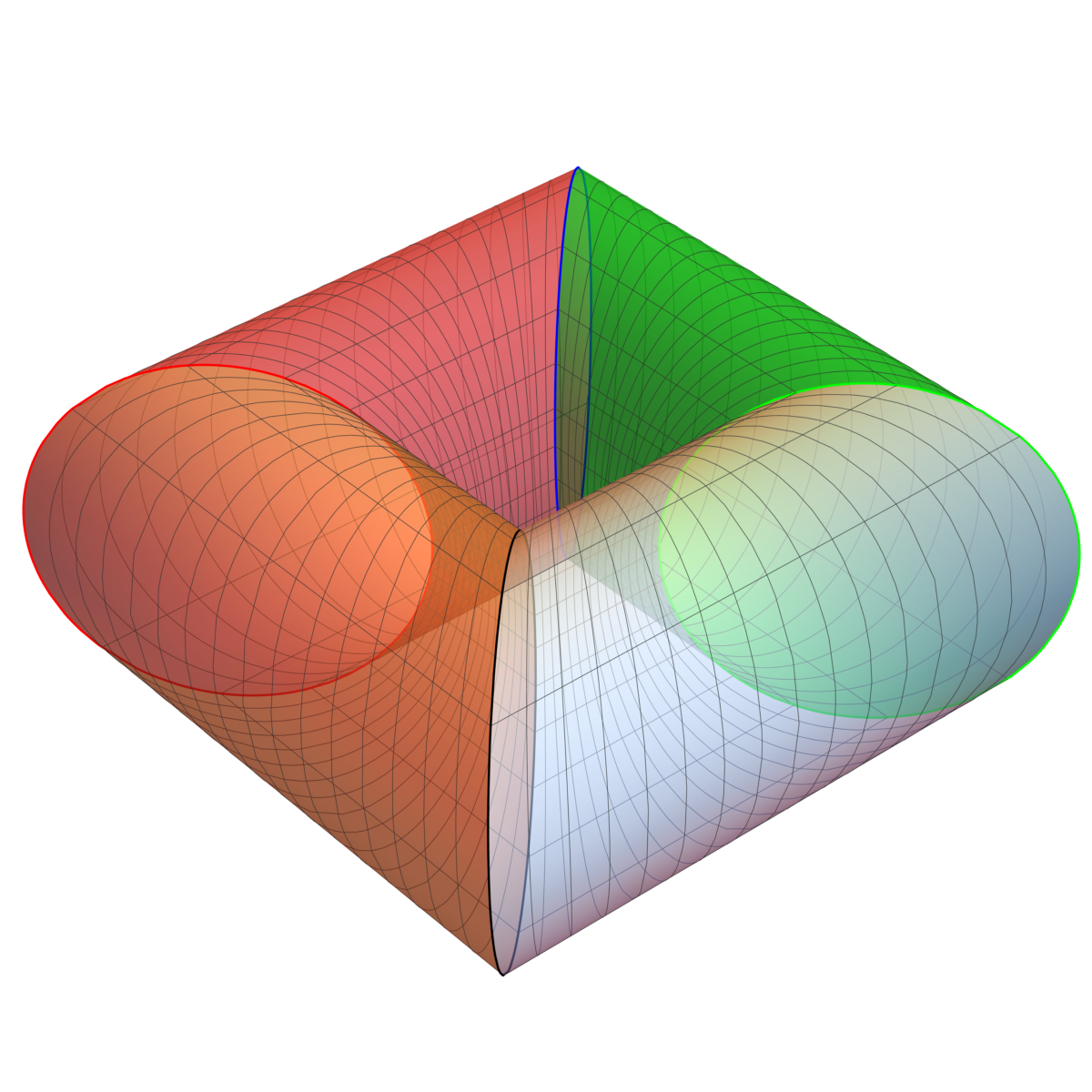}
	\caption{Embedded semi-discrete examples created by imposing extra symmetries on 
the Bianchi cube, which allows for embedded discrete circular curvature lines.}
\end{figure}

\section{Proof of Theorem \ref{mainthm}}\label{sec2}

\begin{proof}
Consider the complete Bianchi cube made from closed Darboux transforms $x_j(t)$ 
of a circle $x(t)$ -- like in Example \ref{exa} -- as the first layer of transforms.  Stepping through particular curves, with each selected curve 
adjacent to the previous one, we can create a ``discrete curve of curves" starting 
and ending at $x(t)$ and including all $x_j(t)$ for $j=3,...,n$.  This gives us a semidiscrete $2$-dimensional torus that is full in 
$\mathbb{R}^n$, and is isothermic (that is, it is a discretized isothermic net).  More generally, taking a ``discrete 
$(k-1)$-dimensional surface of curves" within the Bianchi cube, we can build a semidiscrete isothermic 
$k$-dimensional torus, again full in $\mathbb{R}^n$.  Similarly, one could replace $x(t)$ by a discrete circle 
and follow the same approach to create a full discrete isothermic $k$-dimensional torus once again full in $\mathbb{R}^n$.  
\end{proof}

\begin{remark}
When the linear parallel transport map $\textbf{P}$ becomes $\pm \text{id}$, for 
which we call the associated 
$\mu$ a {\em resonance point}, then any lightlike initial condition will result in a 
closed Darboux transform (again see \cite{CLO}, \cite{CLO2}, \cite{CLO3}, \cite{CLO4}).  
Many planar loops have resonance points (see \cite{CLO5}), and circles in particular have infinitely many resonance points.  Thus, when the initial curve $x(t)$ is a circle, 
we could choose all the $\mu_j$ to be such points.  While this is not 
necessary for the argument above proving Theorem \ref{mainthm}, this has the advantage of producing more interesting semidiscrete tori, 
as most values of $\mu$ will give only circles for $\hat x (t)$ (see Figure \ref{fig:SD-tori-1} and the lefthand side of Figure 
\ref{fig44}), but resonance points will provide noncircular 
$\hat x (t)$ (see Figure \ref{fig33}, the two righthand graphics in Figure \ref{fig44} and the two lefthand graphics in 
Figure \ref{fig555}).  

Also, should we have a noncircular closed initial curve $x(t)$ with infinitely many resonance points, 
we could use $n-2$ of those resonance points as our values for $\mu_j$.  We 
can then freely choose the initial 
conditions for the curves $x_j(t)$ and be guaranteed those curves also close, so that the entire Bianchi cube has closing 
curves.  If those initial conditions, together with $x(t)$, form a full set in $\mathbb{R}^n$, the above argument, with only slight 
possible adjustments for the dimensions of subspaces containing particular curves, will still apply.

Generally speaking, resonance points give the most interesting closed Darboux transforms, but interesting 
examples can occur for non-resonant points as well.  
\end{remark}

\begin{remark}
We can consider the above argument in a more general setting, again where the initial closed curve $x(t)$ may not be a circle.  
We could modify the argument only slightly to allow $x(t)$ and closed $x_j(t)$ to be in subspaces of other dimensions.  For 
example, we could consider a noncircular $x(t)$ full in $\text{span}\{e_1,e_2,e_3\}$ and $x_j(t)$ hopefully full in 
$\text{span}\{e_1,e_2,e_3,e_j\}$ for each $j \geq 4$, 
so in effect we are taking $n=4$ when producing each individual $x_j(t)$.  
From \cite{GQ}, it follows that $\textbf{P}$ has real lightlike eigenvectors for 
$A \in SO_{n+1,1}$ if $n$ is even.  Should one of those eigenvectors lie outside the subspace containing $x(t)$, we could then provide 
examples that prove Theorem \ref{mainthm} where no single curve in the torus is a circle.  

Because we are not guaranteed that the lightlike eigenvectors are situated so as to produce $x_j(t)$ in higher dimensional 
subspaces than that of $x(t)$, we cannot yet use this more general argument to prove the theorem.  However, numerical 
results show that there are curves for which eigenvectors are situated so that in fact this stumbling block does not occur, see 
the right-hand graphic in Figure \ref{fig555}.
\end{remark}

\noindent {\bf Acknowledgements.} The authors thank Fran Burstall, Joseph Cho, Yuta Ogata and the referee for 
helpful discussions and suggestions.

\end{document}